\documentclass[11pt]{article}
\usepackage[T1]{fontenc}
\usepackage[utf8]{inputenc}
\usepackage{lmodern}
\usepackage{amsmath,amssymb,amsthm}
\usepackage{microtype}
\usepackage{geometry}
\usepackage{enumitem}
\newtheorem{theorem}{Theorem}
\newtheorem{proposition}{Proposition}
\newtheorem{lemma}{Lemma}
\newtheorem{conjecture}{Conjecture}

\newcommand{\cS}{\mathcal{S}}
\newcommand{\cA}{A}
\newcommand{\eps}{\varepsilon}

\title{On the Maximum Size of 2-Weakly Compatible Split Systems}
\author{Pei Wu \and Stefan Gr\"unewald}
\date{}

\begin{document}
	\maketitle
	
	\begin{abstract}
		We consider a Tur\'an-type problem arising in phylogenetics: determining the maximum size of a 2-weakly compatible split system. This compatibility condition arises in the reconstruction of phylogenetic networks from quartet weights. It was previously shown that a 2-weakly compatible split system has size at most
		\[
		3\binom{n}{4}+\binom{n}{2}.
		\]
		We prove that the maximum size is $O(n^{5/2})$.
	\end{abstract}
	
	\section{Introduction}
	
	In extremal graph theory, a classical result of Tur\'an~\cite{Turan1941} determines the maximum number of edges in a graph on $n$ vertices that contains no complete subgraph $K_r$ for $r>2$: the extremal number is
	\[
	\left(\frac{r-2}{r-1}+o(1)\right)\binom{n}{2}.
	\]
	More generally, for any fixed forbidden graph $H$, one asks for the maximum number of edges in an $n$-vertex graph containing no copy of $H$; this is the Tur\'an number $\operatorname{ex}(n,H)$. This framework extends naturally to set systems: given a family of forbidden configurations, what is the maximum size of a structure that avoids them? This quantity is denoted by $\operatorname{forb}(m,F)$, where $F$ is either a single forbidden configuration or a family of such configurations. For a survey of this type of problem, see~\cite{AnsteeSali2013}. 
	
	The VC-dimension bound/Sauer--Shelah theorem~\cite{Sauer1972,Shelah1972,VapnikChervonenkis1971} implies that $\operatorname{forb}(m,F)$ is polynomially bounded in $m$ for every nonempty $F$, even the number of all possible subset is $2^m$. 
	A central conjecture of Anstee and Sali predicts asymptotically optimal constructions, from which the asymptotic growth of $\operatorname{forb}(m,F)$ for a single forbidden configuration $F$ can be derived.

	We consider split systems, the projective analogue of set systems, in which the elements are bipartitions of the ambient set rather than subsets. We study a Tur\'an-type problem with forbidden configuration
	\[
	\{123|456,124|356,125|346,126|345\}.
	\]
	A split system that avoids this configuration is called \emph{2-weakly compatible}. This notion arises in phylogenetics: 2-weakly compatible split systems are precisely those that can be reconstructed consistently from quartet data~\cite{YangGrunewaldWan2013}.
	
	A linear-algebra argument~\cite{YangGrunewaldWan2013} shows that the size of a 2-weakly compatible split system is at most
	\[
	3\binom{n}{4}+\binom{n}{2}.
	\]
	On the other hand, circular split systems yield a lower bound of $\binom{n}{2}$. Our main result substantially improves the upper bound.
	
	\begin{theorem}\label{thm:main}
		If $\#X=n$, then the maximum size of a 2-weakly compatible split system on $X$ is $O(n^{5/2})$.
	\end{theorem}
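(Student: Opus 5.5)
The plan is to prove Theorem~\ref{thm:main} with a Kővári--Sós--Turán style double count: the forbidden configuration will be recast as a forbidden complete bipartite pattern in an auxiliary incidence structure, and the exponent $5/2$ should come out of the usual convexity (Cauchy--Schwarz) step.

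\textbf{Reading the forbidden configuration.} First I would reformulate the condition. The four forbidden splits on $\{1,\dots,6\}$ have a common shape. A pair $p=\{1,2\}$ lies on one side of each of them, and each of the remaining four points $x\in\{3,4,5,6\}$ joins $p$ in exactly one of the four splits, namely the split $p\cup\{x\}\,|\,\{3,4,5,6\}\setminus\{x\}$. So $\cS$ fails to be 2-weakly compatible exactly when there are a pair $p$, a $4$-set $Q$ disjoint from $p$, and four splits of $\cS$ that restrict on $p\cup Q$ to the four splits $p\cup\{x\}\,|\,Q\setminus\{x\}$ for $x\in Q$.

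\textbf{Trivial splits and the incidence structure.} Splits with a side of size at most $2$ number at most $n+\binom n2=O(n^2)$, so I discard them. Every remaining split $A|B$ has $\min(|A|,|B|)\ge 3$.

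For each such split $s=A|B$ and each pair $p\subseteq A$, record the incidences $(s,p,x,T)$ with $x\in A\setminus p$ and $T\subseteq B$, $|T|=3$. Each such incidence says that $s$ realises the pattern $p\cup\{x\}\,|\,T$ on the $6$-set $p\cup\{x\}\cup T$.

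Now fix $p$ and a $4$-set $Q\subseteq X\setminus p$. Call $x\in Q$ \emph{covered} if some split of $\cS$ realises $p\cup\{x\}\,|\,Q\setminus\{x\}$. 2-weak compatibility says exactly that not all four points of $Q$ are covered.

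\textbf{Double counting.} For fixed $p$, define a bipartite graph $G_p$. One side is $X\setminus p$. The other side is the set of $3$-subsets $T$. The point $x$ is joined to $T$ if some split realises $p\cup\{x\}\,|\,T$.

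The key claim I want is an edge-count bound
\[
e(G_p)=O(n^{3.5})\quad\text{or, more precisely,}\quad \sum_p e(G_p)=O(n^{5.5}).
\]
This should follow because the forbidden pattern says that no $4$-set $Q$ is ``fully covered''. That is a hypergraph Turán condition in which every $x\in Q$ must be adjacent to $Q\setminus\{x\}$. I would bound $e(G_p)$ by counting pairs $(x,T)$ and applying Cauchy--Schwarz over the link of each $T$, as in the proof of $\operatorname{ex}(n,K_{2,2})=O(n^{3/2})$.

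In the other direction, every nontrivial split $s$ contributes incidences to many $(p,x,T)$. If I can then show that a single $(p,x,T)$ can be realised by only few splits, or I weight splits suitably, the two counts compare to give $|\cS|=O(n^{5/2})$.

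\textbf{Main obstacle.} The hard step is the last comparison. Distinct splits may realise exactly the same set of $6$-point patterns on many $6$-sets, so a naive count loses far more than a factor $n^{1/2}$. Balanced splits contribute about $n^5$ incidences, while very unbalanced ones contribute only about $n^3$.

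To handle this, I would first try to partition the splits dyadically by the size $k$ of their smaller side. For each $k$ I would choose the incidence weights so that a split with smaller side of size $k$ is counted roughly $k^2$ times. I would then use the fact that a split is determined by the patterns it induces. With that normalisation, the Cauchy--Schwarz step in $G_p$ should yield a bound of order $n^{5/2}$ per dyadic class. I expect the whole difficulty to lie in making this normalisation compatible with the Kővári--Sós--Turán inequality.
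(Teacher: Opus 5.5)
\textbf{There is a genuine gap: your key claim is false, so the argument cannot get started.} Take the one-split system $\cS=\{A|B\}$ with $\#A=\#B=n/2$. It is trivially 2-weakly compatible, because the forbidden configuration needs four distinct splits. For every pair $p\subseteq A$, the graph $G_p$ contains every edge $xT$ with $x\in A\setminus p$ and $T\in\binom{B}{3}$. Hence $e(G_p)=\Theta(n^4)$ and $\sum_p e(G_p)=\Theta(n^6)$. This also shows that a balanced split contributes about $n^6$ incidences, not $n^5$.

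The structural reason is that ``no fully covered $Q$'' is not a K\H{o}v\'ari--S\'os--Tur\'an condition. Each potential edge $xT$ of $G_p$ lies in exactly one $4$-set $Q=T\cup\{x\}$. So the constraints for different $Q$ involve disjoint sets of edges, and they only cap the density at $3/4$, that is, $e(G_p)\le 3\binom{n-2}{4}$. There is no complete bipartite pattern for a Cauchy--Schwarz step to amplify.

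The comparison step you flag as the main obstacle is not merely hard; it is unavailable in this framework. A single split already saturates the incidence count. Conversely, one $6$-point pattern can be realized by $\Theta(n^2)$ splits of a circular system: take intervals whose left end lies between the sixth and first chosen points and whose right end lies between the third and fourth. That is a constant fraction of a system of size $\binom n2$. Any argument that records only which patterns occur, and forgets how the realizing splits relate to one another, cannot bound $\#\cS$. Dyadic weighting by the size of the smaller side does not change this. Neither does the fact that a split is determined by its patterns, which gives no numerical bound.

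The missing idea is to relate splits that differ by moving a single element. The paper does this as follows.
\begin{enumerate}
\item Fix a root $r$. Take as vertices the sides $a_r(s)$ not containing $r$, and join $A\to A+x$. These edges are exactly the $x$-pairs.
\item Four out-neighbours of a vertex $A$ with $\#A\ge 2$, or four in-neighbours when $\#(X\setminus A)\ge 2$, restrict to the forbidden configuration. So all such degrees are at most $6$.
\item A discharging argument using hard $6$-vertices and their partners gives $\#E\le\frac52\#V+cn^2$.
\item Summing $\#\cS-\#(\cS|_{X-x})=\#\partial_x\cS$ over $x\ne r$ gives $(m-1)(a_m-a_{m-1})\le\frac52 a_m+cm^2$, where $a_m=\max_{\#Y=m}\#(\cS|_Y)$.
\item This recurrence solves to $a_n=O(n^{5/2})$.
\end{enumerate}
The exponent $5/2$ is half of the average-degree bound $5$, not a K\H{o}v\'ari--S\'os--Tur\'an exponent.
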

	
	\subsection{Background from phylogenetics}
	
	The usual goal of a phylogenetic analysis is to reconstruct a leaf-labelled tree from data, most commonly DNA or other biomolecular sequences. The leaves correspond to taxonomic units (taxa), whereas the internal vertices correspond to speciation events. Split systems arise naturally from a phylogenetic tree: each edge $e$ defines a bipartition of the taxon set, namely the two subsets of taxa corresponding to the components obtained by deleting $e$. In phylogenetics, bipartitions are usually called \emph{splits}. A collection of splits of a taxon set $X$ is called \emph{compatible} if all its splits can be obtained from a single phylogenetic tree. A classical result of Buneman~\cite{Buneman1971} states that a split system is compatible if and only if, for every pair of splits, one can choose one part from each split such that the two chosen parts have empty intersection. Equivalently, a compatible split system avoids the forbidden configuration $\{12|34,13|24\}$. The maximum cardinality of a compatible split system on $n\ge 2$ taxa is $2n-3$, attained by a tree in which every internal vertex has degree~3.
	
	The split-system formulation allows trees to be extended to networks that can accommodate reticulate evolution or ambiguity in the data. In this setting, the compatibility condition can be relaxed, permitting split systems more general than compatible ones. The first method based on this approach was SplitDecomposition, introduced by Bandelt and Dress~\cite{BandeltDress1992}. It permits the consistent reconstruction of split systems with the following property: a split system $\cS$ is \emph{weakly compatible} if and only if it avoids the forbidden configuration $\{12|34,13|24,14|23\}$. The maximum cardinality of a weakly compatible split system on $n\ge 2$ taxa is $\binom{n}{2}$, attained by circular split systems. Since SplitDecomposition uses pairwise distances as input, this bound is best possible.
	
	The more recent Quartet-Net method~\cite{YangGrunewaldWan2013} requires more information than pairwise distances and can reconstruct split systems that are more general than weakly compatible systems. It requires the precomputation of
	\[
	3\binom{n}{4}+\binom{n}{2}
	\]
	values from the raw data and consistently reconstructs split systems with the following property: a split system $\cS$ is 2-weakly compatible if and only if it avoids the forbidden configuration $\{123|456,124|356,125|346,126|345\}$.
	
	\subsection{Notation and terminology}
	
	Throughout the paper, $A+x$ denotes $A\cup\{x\}$, and $A-x$ denotes $A\setminus\{x\}$. We occasionally omit the outer braces of sets when no confusion can arise.
	
	\section{Preliminaries}
	
	Let $X$ be a finite ambient set. A \emph{split} is a bipartition $\{A,B\}$ of $X$. For $A=\{a_1,\ldots,a_k\}$ and $B=\{b_1,\ldots,b_l\}$, we write the split as $A|B$ or $a_1\cdots a_k|b_1\cdots b_l$; throughout, we identify $A|B$ with $B|A$. The \emph{size} of $A|B$ is $\min\{k,l\}$, and a split of size 0 or 1 is called \emph{empty} or \emph{trivial}, respectively. Although the empty split is usually not regarded as a split, we include it for convenience. A collection of splits of $X$ is called a \emph{split system on $X$}.
	
	For a split system $\cS$ on $X$ and $X'\subseteq X$, the restriction of $\cS$ to $X'$ is denoted by
	\[
	\cS|_{X'}:=\{(A\cap X')|(B\cap X'):A|B\in\cS\}.
	\]
	Two split systems $\cS_X$ on $X$ and $\cS_Y$ on $Y$ are \emph{isomorphic} if one can be obtained from the other by permuting the labels. A class of split systems, that is, a set of finite split systems, is called \emph{closed} if, for every split system $\cS$ in the class, every subsystem $\cS'\subseteq\cS$ and every restriction of $\cS$ also belong to the class.
	
	Given a split system $\mathcal{F}$ on $\{1,\ldots,k\}$ for some positive integer $k$, the class of all split systems for which no restriction to a $k$-element set contains a split system isomorphic to $\mathcal{F}$ is clearly closed. In this case, $\mathcal{F}$ is called the \emph{forbidden configuration} defining the class. In this notation, the forbidden configuration for compatible split systems is $\{12|34,13|24\}$, whereas that for weakly compatible split systems is $\{12|34,13|24,14|23\}$. For every positive integer $k$, we define the class of \emph{$k$-weakly compatible split systems} by the forbidden configuration
	\[
	\left\{\{1,\ldots,k\}\cup\{i\}\,\middle|\,\{k+1,\ldots,2k+2\}\setminus\{i\}: k+1\le i\le 2k+2\right\}.
	\]
	Thus, weak compatibility is equivalent to 1-weak compatibility.
	
	A powerful tool for bounding the maximum size of split systems with a given forbidden configuration is to bound the number of $x$-pairs~\cite{DressKlucznikKoolenMoulton2001,DressKoolenMoulton2005,GrunewaldKoolenMoultonWu2012}. For a split system $\cS$ on $X$ and $x\in X$, an \emph{$x$-pair} of $\cS$ is a split $A|B$ of $X-x$ such that $(A+x)|B\in\cS$ and $A|(B+x)\in\cS$. Let
	\[
	\partial_x\cS:=\{A|B:(A+x)|B,\ A|(B+x)\in\cS\}
	\]
	be the split system on $X-x$ consisting precisely of all $x$-pairs of $\cS$. Every split $A|B$ of $X-x$ is the restriction of at most two splits in $\cS$, and equality implies that $A|B$ is an $x$-pair of $\cS$. Hence, the following proposition holds.
	
	\begin{proposition}\label{prop:x-pairs}
		If $\cS$ is a split system on $X$ and $x\in X$, then
		\begin{equation}\label{eq:x-pair-count}
			\#\cS-\#(\cS|_{X-x})=\#\partial_x\cS.
		\end{equation}
	\end{proposition}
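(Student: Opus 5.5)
The plan is a direct double-counting argument. Fix $x\in X$ and consider the restriction map $\rho:\cS\to\cS|_{X-x}$, $A|B\mapsto (A\cap(X-x))|(B\cap(X-x))$. I will show that every split of $X-x$ has at most two preimages under $\rho$, that it has exactly two precisely when it is an $x$-pair, and then sum the fibre sizes.

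First, the fibre bound. Let $A'|B'$ be a split of $X-x$. Any split of $X$ restricting to it must place $x$ on one side, so it is either $(A'+x)|B'$ or $A'|(B'+x)$. Hence at most two splits of $\cS$ restrict to $A'|B'$. By the definition of $\partial_x\cS$, both candidates lie in $\cS$ exactly when $A'|B'\in\partial_x\cS$. There is one degenerate case to check: the two candidates might coincide as unordered bipartitions. That would require $\{A'+x,B'\}=\{A',B'+x\}$, and since $A'+x\neq A'$ this forces $A'+x=B'+x$, i.e.\ $A'=B'$. This is impossible because $A'$ and $B'$ are disjoint and cover $X-x\neq\emptyset$. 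The empty split $\emptyset|(X-x)$ causes no trouble: it lifts to $\{x\}|(X-x)$ and $\emptyset|X$, which are distinct.

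Next, the counting. Every split in $\cS|_{X-x}$ has a nonempty fibre by definition of the restriction. Summing fibre sizes over $\cS|_{X-x}$ gives
\[
\#\cS=\sum_{s\in\cS|_{X-x}}\#\rho^{-1}(s)=\#(\cS|_{X-x})+\#\{s:\#\rho^{-1}(s)=2\}.
\]
The last set is exactly $\partial_x\cS$. This needs the observation that every $x$-pair lies in $\cS|_{X-x}$, which holds because it is the restriction of $(A+x)|B\in\cS$. Rearranging yields \eqref{eq:x-pair-count}.

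The only delicate point is the distinctness of the two lifts together with the identification $A|B=B|A$, and that was handled above assuming $\#X\ge2$. If $X=\{x\}$, the only split of $X$ is the empty split $\emptyset|\{x\}$, and the set $X-x$ is empty. I will either handle this trivial case by direct inspection or exclude it as degenerate.
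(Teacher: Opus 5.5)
Your argument is correct and is the same as the paper's: each split of $X-x$ is the restriction of at most two splits of $\cS$, with exactly two precisely for $x$-pairs, and summing fibre sizes over $\cS|_{X-x}$ gives the identity. Your check that the two lifts $(A'+x)|B'$ and $A'|(B'+x)$ are distinct when $\#X\ge 2$ makes explicit a point the paper leaves tacit. For $\#X=1$ you should exclude the case rather than verify it, since the two lifts coincide there and, under a literal reading of the definitions, the identity can fail.
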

	
	Proposition~\ref{prop:x-pairs} holds only because $\cS$ and $\cS|_{X-x}$ are allowed to contain the empty split. Otherwise, the trivial split $x|X-x$ would need to be considered separately. Proposition~\ref{prop:x-pairs} implies that, for $c>-1$, an $O(n^c)$ upper bound on
	\[
	\min\{\#\partial_x\cS:x\in X\}
	\]
	for every split system in a given class on a set $X$ with $\#X=n$ yields an $O(n^{c+1})$ upper bound on $\#\cS$. This approach gives a cubic upper bound for the cardinality of 2-weakly compatible split systems, but we use a new method to obtain the bound $O(n^{5/2})$.
	
	\section{Digraphs representing a split system}
	
	Given a split system $\cS$ on $X$, we define a directed graph that represents $\cS$. Choose a root $r\in X$, and, for each split $s\in\cS$, let $a_r(s)$ be the part of $s$ that does not contain $r$. Set
	\[
	\cA_r(\cS):=\{a_r(s):s\in\cS\}.
	\]
	We define $G_r(\cS)=(V,E)$ to be the edge-labelled digraph with $V=\cA_r(\cS)$ and a directed edge $(a_r(s_1),a_r(s_2))\in E$ for $s_1,s_2\in\cS$ if and only if there exists $x\in X$ such that $a_r(s_2)=a_r(s_1)+x$. Such an edge is labelled $x$. For $x\in X-r$, the $x$-pairs of $\cS$ correspond to the $x$-labelled edges of $G_r(\cS)$, whereas the $r$-pairs are ignored. A subset of the vertices of a graph representing a split system will be called 2-weakly compatible if and only if the corresponding set of splits is 2-weakly compatible.
	
	The main result of this section is the following asymptotic bound on the number of edges in a digraph representing a 2-weakly compatible split system.
	
	\begin{theorem}\label{thm:edge-bound}
		There exists a constant $c$ such that, for every digraph $G=(V,E)$ representing a 2-weakly compatible split system $\cS$ on a set $X$ with $\#X=n\ge 6$,
		\begin{equation}\label{eq:edge-bound}
			\#E\le \frac{5}{2}\#V+cn^2.
		\end{equation}
	\end{theorem}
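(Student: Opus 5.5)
The plan is to exploit how rigid the forbidden configuration is when viewed inside $G=G_r(\cS)$. Its vertices are subsets of $X-r$, and its edges are single-element extensions $a\to a+x$. First I will prove a local degree bound, and then refine it by a discharging argument.

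\textbf{Step 1 (local degree bound).} Let $a\in V$ have out-neighbours $a+x_1,\dots,a+x_4$, and suppose $\#a\ge 2$. Pick $u,w\in a$ and set $Y=\{u,w,x_1,\dots,x_4\}$. Then the restrictions of the four splits to $Y$ are $uwx_i\,|\,Y-\{u,w,x_i\}$, which is exactly the forbidden configuration. Hence $\operatorname{out}(a)\le 3$ whenever $\#a\ge 2$.

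Dually, let $b$ have in-neighbours $b-y_1,\dots,b-y_4$, and suppose $\#(X-b)\ge 2$. Pick $w\in X-b$ with $w\ne r$ and set $Y=\{r,w,y_1,\dots,y_4\}$. The parts containing $r$ restrict to $rwy_i$, so again we get the forbidden configuration. Hence $\operatorname{in}(b)\le 3$ whenever $\#(X-b)\ge 2$.

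The exceptional vertices are those with $\#a\le 1$ or $\#(X-a)\le 1$. There are $O(n)$ of them, each of degree at most $n$, so they carry $O(n^2)$ edges. This is where the term $cn^2$ comes from. Since $2\#E=\sum_v(\operatorname{in}(v)+\operatorname{out}(v))$, we already get $\#E\le 3\#V+O(n^2)$.

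\textbf{Step 2 (improving $3$ to $5/2$).} It suffices to show that, apart from the $O(n^2)$ exceptional contribution, the total degree $d(v)=\operatorname{in}(v)+\operatorname{out}(v)$ averages at most $5$. Only vertices with $d(v)=6$ exceed this, that is, those with out-edges $x_1,x_2,x_3$ and in-edges $y_1,y_2,y_3$. I will study such a vertex $a$ together with its neighbours.

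The idea is to look for further mixed instances of the forbidden configuration on $6$-sets built from $x_i,y_j$, one element of $a$ and one element outside $a+r$. A typical instance uses splits $a+x_i$, $a-y_j$, or two-step sets $a+x_i-y_j$ when the corresponding edges are present. I expect to show that when $d(a)=6$, every out-neighbour $a+x_i$ or in-neighbour $a-y_j$ has degree at most $4$. The reason is that an extra edge at such a neighbour, combined with the three edges on the opposite side of $a$, yields four splits of the forbidden shape on a suitable $6$-set. Given this, each degree-$6$ vertex sends charge $1/6$ along each of its edges to its neighbours of degree at most $4$.

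\textbf{Step 3 (discharging check).} A neighbour of degree $d\le 4$ receives at most $d/6\le 2/3<1$, so it ends with charge at most $5$. Every degree-$6$ vertex ends with charge exactly $5$. Summing gives $\sum_v d(v)\le 5\#V+O(n^2)$, which is the bound in Theorem~\ref{thm:edge-bound}.

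\textbf{Main obstacle.} The hard part is the structural claim in Step 2. Pure out-stars or in-stars are easy, as Step 1 shows, but the mixed configurations need exactly $3$-element sides on a $6$-set. Neighbours of a degree-$6$ vertex may realise the four required splits only in some cases. My first attempt will be a case analysis of the edge labels at a neighbour $a+x_1$, according to whether they lie in $\{x_2,x_3,y_1,y_2,y_3\}$ or are new. If that fails, I would weaken the claim to "few degree-$6$ neighbours" and redistribute charge over paths of length two. Any degenerate small cases, where some chosen elements coincide with $r$ or the sets are too small, will again be absorbed into $cn^2$.
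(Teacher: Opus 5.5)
Your Step~1 is exactly Lemma~\ref{lem:degree-bounds}. The gap is in Step~2: its structural claim is false, even for vertices far from the degenerate range, and Step~3 depends on it entirely.

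The local argument you sketch only controls the edges of a neighbour that point back across $A$. If $A$ has in-neighbours $A-a_i$ and out-neighbours $A+b_i$, you get the following (cf.\ the proof of Lemma~\ref{lem:first-charge}): every out-neighbour of $A-a_i$ other than $A$ has the form $A-a_i+b_j$, and every in-neighbour of $A+b_i$ other than $A$ has the form $A-a_j+b_i$. Edges $A-a_i\to A-a_i+b_i$ are still allowed; they are precisely what makes $A$ hard. The in-edges of $A-a_i$ and the out-edges of $A+b_i$ are not constrained at all.

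Here is a concrete counterexample. Let $X=\{r,a_1,a_2,a_3,z,b_1,b_2,b_3\}\cup P\cup Q$, where $P,Q$ are disjoint sets of $m\ge 3$ further elements, and put $A_0=\{a_1,a_2,a_3,z\}$. Let $\cS$ consist of the $13$ splits whose parts avoiding $r$ are $T\cup P$, where $T\in\{A_0,\{a_3,z\},\{a_2,z\},\{a_2,a_3\}\}$ or $T$ is one of $A_0-a_i$, $A_0+b_i$, $A_0-a_i+b_i$ for $i=1,2,3$.
\begin{itemize}
\item $A=A_0\cup P$ is a hard $6$-vertex.
\item $A-a_1$ has out-neighbours $A$ and $A-a_1+b_1$, and in-neighbours $A-a_1-a_2$, $A-a_1-a_3$, $A-a_1-z$. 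Hence $d(A-a_1)=5$.
\item All the sets have size between $m+2$ and $m+5$, so this is not a degenerate small case.
\end{itemize}

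The system is 2-weakly compatible. In a forbidden configuration, only the distinguished pair lies on a common side of all four splits, and no pair is separated by all four. Hence a bad $6$-set meets $P$, or $Q+r$, in at most two elements, and cannot meet both. The remaining cases live on $A_0\cup\{b_1,b_2,b_3\}$ plus at most two elements of one kind, and reduce to a routine finite check. Under your rule, $A$ sends $1/6$ to only five neighbours and keeps charge $6-5/6>5$.

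What is missing is the mechanism the paper uses for exactly this situation. It splits $6$-vertices into two kinds.
\begin{itemize}
\item Non-hard ones: the bipartite graph $G_A$ has an isolated vertex. Then some neighbour has out- or in-degree $1$ and can afford to send a whole unit.
\item Hard ones: the charge must come from distance two, namely from a partner $A-a_i+b_i$.
\end{itemize}
For the hard case one needs three facts:
\begin{itemize}
\item every hard $6$-vertex has a partner;
\item no vertex is a partner of two hard $6$-vertices (Lemma~\ref{lem:hard-partners});
\item some partner has $C_1\ge 1$.
\end{itemize}
In the worst case, where all partners have degree $5$, the last fact holds only after the partner has itself received a unit from a vertex at distance three from $A$ (Lemma~\ref{lem:nonnegative-charge}). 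Your fallback of redistributing along paths of length two points in this direction, but it supplies none of these facts.

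Separately, the small cases are not automatically absorbed into $cn^2$. Vertices corresponding to $3$- and $4$-splits could a priori be far more numerous than $O(n^2)$. The paper needs a further 2-weak-compatibility argument to bound them by $O(n^2)$: at most three such $3$-sets and at most seven such $4$-sets contain any given pair.
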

	
	Before proving the theorem, we establish several properties of the digraph $G$. In the next four lemmas, assume that $G=(V,E)$ represents a 2-weakly compatible split system $\cS$ on a set $X$ with $\#X=n$.
	
	\begin{lemma}\label{lem:degree-bounds}
		Let $A\in V$.
		\begin{enumerate}[label=(\roman*)]
			\item If $\#A>1$, then $d^+(A)\le 3$, and if $\#A<n-1$, then $d^-(A)\le 3$. In particular, $d(A)\le 6$ if $A$ does not correspond to the empty split or a trivial split.
			\item If $A$ corresponds to a 2-split, then $d(A)\le 5$.
			\item If $\#A\in\{0,1,n-1\}$, then $d(A)\le n-1$.
		\end{enumerate}
	\end{lemma}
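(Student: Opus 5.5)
The plan is to reduce every degree bound to an explicit occurrence of the forbidden configuration $\{123|456,124|356,125|346,126|345\}$ inside a restriction of $\cS$ to a suitable $6$-element set. The basic observation is this. The out-neighbours of $A$ in $G$ are exactly the sets $A+x$ with $x\notin A$, $x\neq r$ and $A+x\in V$, each such neighbour giving a distinct label $x$. Likewise the in-neighbours are the sets $A-x$ with $x\in A$ and $A-x\in V$. Hence $d^+(A)\le n-1-\#A$ and $d^-(A)\le \#A$, so $d(A)\le n-1$ for every vertex. This already gives (iii), and it also handles the trivially small cases appearing in (ii).

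For (i), out-degree: suppose $\#A>1$ and $d^+(A)\ge 4$, with out-neighbours $A+x_1,\dots,A+x_4$ for distinct $x_1,\dots,x_4\notin A+r$. Pick distinct $a_1,a_2\in A$ and restrict to $Y=\{a_1,a_2,x_1,x_2,x_3,x_4\}$. The split corresponding to $A+x_i$ restricts to
\[
\{a_1,a_2,x_i\}\,\big|\,\{x_j:j\neq i\}.
\]
Relabelling $a_1,a_2,x_1,\dots,x_4$ as $1,\dots,6$ turns these four splits into precisely the forbidden configuration, which contradicts 2-weak compatibility. In-degree is the mirror argument, and here the root becomes useful. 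Let $B=X\setminus A$; it contains $r$. If $\#A<n-1$, then $B$ also contains some $b\neq r$. If there are in-neighbours $A-x_1,\dots,A-x_4$, the corresponding splits are $(A-x_i)\,|\,(B+x_i)$. Restricting to $\{r,b,x_1,\dots,x_4\}$ gives $\{x_j:j\ne i\}\,|\,\{r,b,x_i\}$, which is again the forbidden configuration. Adding the two bounds gives $d(A)\le 6$ whenever $2\le\#A\le n-2$, which is exactly the non-empty, non-trivial case.

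For (ii), a $2$-split has either $\#A=2$ or $\#A=n-2$, the latter when $r$ lies in the part of size $2$. If $\#A=2$, then $d^-(A)\le\#A=2$ from the trivial count. Also $d^+(A)\le 3$ by (i), since $\#A>1$. If $\#A=n-2$, then $d^-(A)\le 3$ by (i), since $n-2<n-1$. Also $d^+(A)\le n-1-\#A=1$. Either way $d(A)\le 5$; here $n\ge 6$ ensures that the hypotheses $\#A>1$ and $\#A<n-1$ hold.

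There is no real obstacle in this argument. The only point needing care is the in-degree case, where one must use the root $r$ together with a second element of the complement to supply the two "fixed" elements of the configuration. One must also check that the restricted splits are genuinely the forbidden ones and not their complements; this is harmless, since $A|B$ is identified with $B|A$.
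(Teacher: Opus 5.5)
Your proof is correct and follows essentially the same route as the paper. You restrict four out-neighbour (resp.\ in-neighbour) splits to two fixed elements of $A$ (resp.\ of $X\setminus A$, which you realize as $r$ and one further element) together with the four edge labels, which yields the forbidden configuration; parts (ii) and (iii) then come from the trivial bounds $d^-(A)\le\#A$ and $d^+(A)\le n-1-\#A$. Your appeal to $n\ge 6$ in (ii) is unnecessary, since $\#A>1$ is automatic when $\#A=2$ and $\#A<n-1$ is automatic when $\#A=n-2$.
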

	
	\begin{proof}
		For (i), suppose that $\#A>1$ and $d^+(A)\ge 4$. Let $A+b_1,A+b_2,A+b_3,A+b_4$ be four distinct out-neighbours of $A$, and let $a_1,a_2$ be two distinct elements of $A$. The restrictions of the four splits corresponding to these out-neighbours to $\{a_1,a_2,b_1,b_2,b_3,b_4\}$ form a system isomorphic to the forbidden configuration for 2-weak compatibility. Similarly, if $d^-(A)\ge 4$ and $X-A$ contains two distinct elements, then the restrictions of the splits corresponding to the in-neighbours of $A$ form a system isomorphic to the forbidden configuration. Combining these two bounds gives the final assertion in (i).
		
		Parts (ii) and (iii) follow from (i) and the immediate bounds $d^-(A)\le\#A$ and $d^+(A)\le n-1-\#A$.
	\end{proof}
	
	A \emph{6-vertex} is a vertex of degree 6. A 6-vertex $A$ is called \emph{hard} if there exist $a_i,b_i\in X$ for $i\in\{1,2,3\}$ such that $A-a_i$ and $A+b_i$ are the six neighbours of $A$, and $A-a_i+b_i\in V$ for each $i$. The vertex $A-a_i+b_i$ is called a \emph{partner} of $A$ if none of the sets $A-a_i+b_j$ and $A-a_j+b_i$, with $j\in\{1,2,3\}\setminus\{i\}$, belongs to $V$.
	
	\begin{lemma}\label{lem:hard-partners}
		For every graph $G$ representing a 2-weakly compatible split system, the following statements hold.
		\begin{enumerate}[label=(\roman*)]
			\item Every hard 6-vertex has at least one partner.
			\item If $A-a+b$ is a partner of a hard 6-vertex $A$, then $d^+(A-a)=d^-(A+b)=2$.
			\item No vertex is a partner of two distinct hard 6-vertices.
		\end{enumerate}
	\end{lemma}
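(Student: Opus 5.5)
The plan is to derive every statement by exhibiting a restriction to a $6$-element set that is isomorphic to the forbidden configuration
\[
\{123|456,124|356,125|346,126|345\}.
\]
In the language of $G_r(\cS)$, such a configuration is a $6$-set $S\subseteq X$, a $2$-set $T\subseteq S$ and four distinct $x_1,\dots,x_4\in S-T$, together with four vertices $C_1,\dots,C_4\in V$ such that, for each $k$, either $C_k\cap S=T+x_k$ or $(X-C_k)\cap S=T+x_k$. Throughout, fix a hard $6$-vertex $A$ with neighbours $A-a_i$, $A+b_i$ and the vertices $A-a_i+b_i\in V$ for $i\in\{1,2,3\}$. By Lemma~\ref{lem:degree-bounds}, $A$ is not trivial. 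I will freely choose auxiliary elements $u\in A-\{a_1,a_2,a_3\}$ and $v\in X-A-\{b_1,b_2,b_3,r\}$ when they exist, and treat separately the small cases in which they do not.

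\emph{(ii) first.} Let $A-a+b$ be a partner, say $a=a_1$ and $b=b_1$. The out-neighbours of $A-a_1$ include $A$ and $A-a_1+b_1$, so it suffices to show that $A-a_1+c\notin V$ for every $c\notin A\cup\{b_1\}$. If $c\in\{b_2,b_3\}$, this is excluded directly by the definition of a partner. Otherwise $c$ is new. Then the vertices $A$, $A-a_1+b_1$ and $A-a_1+c$ all contain $A-a_1$. I would choose $S$ to consist of two elements $T\subseteq A-a_1$ together with $a_1,b_1,c$ and one $b_j$ ($j\ne1$). The restrictions of these three vertices are then $T+a_1$, $T+b_1$ and $T+c$. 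For the fourth split I would use either $A+b_j$ (restricted) or a complement-side split coming from $A-a_j$ or $A-a_j+b_j$. The choice of $T$ should force the intersection pattern, using $a_j\in T$ or $a_j\notin S$ as needed. The claim $d^-(A+b_1)=2$ is the same argument applied to complements $X-(\cdot)$, which swaps the roles of the $a_i$ and $b_i$ and of in- and out-degrees.

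\emph{(i).} Suppose no $A-a_i+b_i$ is a partner. Then for each $i$ there is $j\ne i$ with $A-a_i+b_j\in V$ or $A-a_j+b_i\in V$. This gives at least two such ``cross'' vertices on the index set $\{1,2,3\}$, and up to relabelling only a few patterns occur: a shared index, a directed path, or a cycle. In each pattern I would restrict to the $6$-set $\{a_1,a_2,a_3,b_1,b_2,b_3\}$, possibly with $u$ or $v$ substituted. There the vertex $A$ restricts to $a_1a_2a_3$, and every vertex of the form $A-a_i+b_j$ restricts to a $3$-set sharing the pair $\{a_1,a_2,a_3\}-a_i$ with $A$. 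Having two cross vertices with the same removed index $a_i$, together with $A-a_i+b_i$ and the complement of some $A+b_k$, yields four splits through a common pair, which is forbidden. The cyclic pattern needs one more substitution of $u$ or $v$.

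\emph{(iii).} Suppose $P=A-a+b=A'-a'+b'$ is a partner of two hard $6$-vertices $A\ne A'$. Then $A$ and $A'$ are both in-neighbour-then-out-neighbour paths of length $2$ from $P$, so $A'=A-x+y$ for some $x,y$. By (ii), $A-a$ and $A+b$ have the extreme degrees $d^+(A-a)=d^-(A+b)=2$, and likewise for $A'$. I would compare the neighbourhoods of $A$ and $A'$: each has six neighbours, and these overlap heavily around $P$. This should force either that one of $A-a$ or $A+b$ has a third out- or in-neighbour, contradicting (ii), or a cross vertex $A-a_i+b_j$ contradicting the partner property.

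I expect (i) to be the main obstacle. The case analysis over the patterns of cross vertices, and the need for auxiliary elements $u$ and $v$ when $A$ or its complement is small, is where the argument is most delicate.
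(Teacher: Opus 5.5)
\textbf{There are genuine gaps in all three parts.} Your configuration for (ii) cannot be completed, (i) is missing its main case, and (iii) has no argument.

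\textbf{Parts (ii) and (i).} On $S=T\cup\{a_1,b_1,c,b_j\}$ the vertices $A$, $A-a_1+b_1$, $A-a_1+c$ restrict to $T+a_1$, $T+b_1$, $T+c$. The only split completing these to the forbidden configuration is $T+b_j\,|\,a_1b_1c$. The vertex that would give it is $A-a_1+b_j$, which is exactly what the partner hypothesis excludes. Every substitute you list contains $a_1$: $A+b_j$ restricts to $T+a_1+b_j$, $A-a_j$ to $T+a_1$ or $T-a_j+a_1$, and $A-a_j+b_j$ to $T+a_1+b_j$ or $(T-a_j)+a_1+b_j$. This failure is forced, since Lemma~\ref{lem:degree-bounds} allows $A-a_1$ three out-neighbours, so no contradiction can sit inside that out-neighbourhood. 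Drop $a_1$ and use the out-neighbours of $A$ instead. For $T\subseteq A-a_1$ (which exists because $\#A\ge 3$), the vertices $A+b_1,A+b_2,A+b_3,A-a_1+c$ restrict on $T\cup\{b_1,b_2,b_3,c\}$ to $T+b_1,T+b_2,T+b_3,T+c$. In (i), your common-pair argument handles two cross vertices sharing an index, provided the fourth vertex is $A$ itself; $A+b_k$ restricts to a $4|2$ split on $\{a_1,a_2,a_3,b_1,b_2,b_3\}$ and does not fit. You give nothing for the directed-path pattern, e.g.\ $A-a_3+b_1,A-a_1+b_2\in V$, where no pair of the form $\{a_1,a_2,a_3\}-a_i$ works. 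There you need the mixed pair $\{a_3,b_2\}$: on $\{a_1,a_2,a_3,b_1,b_2,b_3\}$ the vertices $A-a_3+b_3$, $A-a_3+b_1$, $A-a_1+b_2$, $A-a_2+b_2$ are forbidden. Any two cross vertices on different index pairs already form a shared index or a path, so no separate cycle case and no auxiliary $u,v$ are needed.

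\textbf{Part (iii).} This part is not argued, and the set-up is wrong. From $P=A-a+b=A'-a'+b'$, part (ii) gives $a\ne a'$ and $b\ne b'$; otherwise $A+b$ or $A-a$ would have three in- or out-neighbours respectively. So $A$ and $A'$ differ in four taxa, not $A'=A-x+y$. Nor does the contradiction come from a third neighbour of $A-a$ or from a cross vertex of one hard vertex. The missing idea is to look one level further out. Write $H_i=P-a_i+b_i$ for the two hard vertices. Let $H_1+c_1,H_1+c_2$ be the out-neighbours of $H_1$ other than $P+b_1$, and $H_2-d_1,H_2-d_2$ the in-neighbours of $H_2$ other than $P-a_2$.
\begin{itemize}
\item If $b_2\notin\{c_1,c_2\}$ and $a_1\notin\{d_1,d_2\}$, these four vertices are forbidden, with common pair $\{a_1,b_2\}$.
\item If, say, $c_2=b_2$, the quadruple $P,\ P-a_1+b_1+b_2,\ H_2-d_1,\ H_2-d_2$ forces $a_1\in\{d_1,d_2\}$.
\item The same argument applied to the in-neighbours of $H_1$ and out-neighbours of $H_2$ gives $P-a_2+b_1+b_2\in V$.
\item Then $P+b_1,\ P+b_2,\ P-a_1+b_1+b_2,\ P-a_2+b_1+b_2$ is forbidden.
\end{itemize}
Without forcing these two vertices at distance two from $P$, your ``overlapping neighbourhoods'' heuristic has nothing to work with.
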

	
	\begin{proof}
		For (i), suppose to the contrary that a hard 6-vertex $A$ has no partner. Let $A-a_i$ and $A+b_i$, for $i=1,2,3$, be its six neighbours, with $A-a_i+b_i\in V$. Without loss of generality, assume that $A-a_1+b_2\in V$. Since $A-a_3+b_3$ is not a partner of $A$, there exists $j\in\{1,2\}$ such that $A-a_3+b_j\in V$ or $A-a_j+b_3\in V$. By symmetry, we may assume that $j=1$. If $A-a_3+b_1\in V$, then the four vertices
		\[
		A-a_3+b_3,\quad A-a_3+b_1,\quad A-a_1+b_2,\quad A-a_2+b_2
		\]
		contradict 2-weak compatibility. If $A-a_1+b_3\in V$, then the vertices
		\[
		A,\quad A-a_1+b_1,\quad A-a_1+b_2,\quad A-a_1+b_3
		\]
		do so.
		
		For (ii), suppose that $A-a+b'\in V$ for some $b'\notin\{a,b\}$. Since $A-a+b$ is a partner of $A$, the set $A+b'$ is not a vertex. The three out-neighbours of $A$, together with $A-a+b'$, are therefore not 2-weakly compatible, contradicting the hypothesis. Hence $d^+(A-a)=2$. The equality $d^-(A+b)=2$ follows by symmetry.
		
		For (iii), suppose that $A-a_1+b_1$ and $A-a_2+b_2$ are two distinct hard 6-vertices and that $A$ is a partner of both. Let $c_1,c_2,d_1,d_2\in X$ be such that
		\[
		A+b_1,\quad A-a_1+b_1+c_1,\quad A-a_1+b_1+c_2
		\]
		are the out-neighbours of $A-a_1+b_1$, and
		\[
		A-a_2,\quad A-a_2+b_2-d_1,\quad A-a_2+b_2-d_2
		\]
		are the in-neighbours of $A-a_2+b_2$. By (ii), $a_1\ne a_2$ and $b_1\ne b_2$. If $b_2\notin\{c_1,c_2\}$ and $a_1\notin\{d_1,d_2\}$, then the vertices
		\[
		A-a_1+b_1+c_1,\quad A-a_1+b_1+c_2,\quad A-a_2+b_2-d_1,\quad A-a_2+b_2-d_2
		\]
		are not 2-weakly compatible. Thus, by symmetry, we may assume that $c_2=b_2$. We must then also have $a_1\in\{d_1,d_2\}$; otherwise, the vertices
		\[
		A,\quad A-a_1+b_1+b_2,\quad A-a_2+b_2-d_1,\quad A-a_2+b_2-d_2
		\]
		are not 2-weakly compatible. Applying the same argument to the in-neighbours of $A-a_1+b_1$ and the out-neighbours of $A-a_2+b_2$, we conclude that $A-a_2+b_1+b_2$ is also a vertex of $G$. Finally, the vertices
		\[
		A+b_1,\quad A+b_2,\quad A-a_1+b_1+b_2,\quad A-a_2+b_1+b_2
		\]
		are not 2-weakly compatible, a contradiction.
	\end{proof}
	
	We now apply the discharging method to show that the average degree of the vertices $A$ satisfying $5\le\#A\le n-5$ is at most 5. Define $C_0(A)=5-d(A)$ for every vertex $A\in V$. We obtain $C_1$ from $C_0$ by applying the following discharging rule: if $d^+(A)=1$ and the unique out-neighbour $A+b$ of $A$ is a 6-vertex or a partner of a hard 6-vertex (or both), then $A$ sends one unit of charge to $A+b$. Similarly, if $d^-(A)=1$ and the unique in-neighbour $A-a$ of $A$ is a 6-vertex or a partner of a hard 6-vertex, then $A$ sends one unit of charge to $A-a$.
	
	\begin{lemma}\label{lem:first-charge}
		Let $A$ be a vertex of a digraph $G=(V,E)$ representing a 2-weakly compatible split system.
		\begin{enumerate}[label=(\roman*)]
			\item If $C_1(A)<0$, then $A$ is a hard 6-vertex.
			\item If $A-a+b$ is a partner of a hard 6-vertex $A$, then $C_1(A-a+b)\ge C_0(A-a+b)$.
		\end{enumerate}
	\end{lemma}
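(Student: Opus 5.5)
Both parts come from bookkeeping the discharging rule, using Lemma~\ref{lem:degree-bounds} for the degree bounds and Lemma~\ref{lem:hard-partners} to control charge received by partners.

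For (i), suppose $C_1(A)<0$. First, $A$ sends at most $d(A)$ units in total. It sends one unit along an out-edge only if $d^+(A)=1$, and one along an in-edge only if $d^-(A)=1$, so it sends at most $2$ units. It receives only if $A$ is a 6-vertex or a partner of a hard 6-vertex. We distinguish cases by $d(A)$.

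\textbf{Case $d(A)\le 3$.} Then $C_0(A)\ge 2$. Since $A$ sends at most $2$ units, receiving nothing gives $C_1(A)\ge 0$. If $A$ receives something, then $A$ is a 6-vertex, which is impossible here, or a partner of a hard 6-vertex. In the latter case part (ii), proved below, gives $C_1(A)\ge C_0(A)\ge 0$.

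\textbf{Case $d(A)=4$.} Then $C_0(A)=1$. Sending two units requires $d^+(A)=d^-(A)=1$, which forces $d(A)=2$, so $A$ sends at most one unit. Again $C_1(A)\ge 0$, unless $A$ receives charge as a partner, which is handled by (ii).

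\textbf{Case $d(A)=5$.} Then $C_0(A)=0$. If $A$ sends a unit, say $d^+(A)=1$, then $d^-(A)=4$, which by Lemma~\ref{lem:degree-bounds}(i) forces $\#A=n-1$. Such boundary vertices need separate care. One option is to check that a unit sent by a vertex with $\#A\in\{0,1,n-1\}$ still leaves $C_1(A)\ge 0$, since such a vertex sends at most one unit and $d^+(A)=1$ with $\#A=n-1$ gives $d(A)\le n-1$. Alternatively, note that the rule only matters for the range $5\le\#A\le n-5$ and restrict the claim accordingly. I would check which reading the paper intends. In the generic range, $d^+,d^-\le 3$, so a 5-vertex has degrees $(2,3)$ or $(3,2)$ and sends nothing.

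\textbf{Case $d(A)=6$.} Then $C_0(A)=-1$, with $d^+(A)=d^-(A)=3$, and $A$ sends nothing. If $A$ receives at least one unit, then $C_1(A)\ge 0$. So the remaining task is to show: if $A$ is not hard, some neighbour has in- or out-degree $1$ toward $A$. Write the out-neighbours as $A+b_i$ and the in-neighbours as $A-a_i$. Suppose every $A-a_i$ has out-degree at least $2$. Then each $A-a_i$ has an out-neighbour $A-a_i+b$ with $b\ne a_i$. If $b\notin\{b_1,b_2,b_3\}$, the three out-neighbours of $A$ together with $A-a_i+b$ violate 2-weak compatibility, exactly as in the proof of Lemma~\ref{lem:hard-partners}(ii). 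So each $i$ has some $j$ with $A-a_i+b_j\in V$. Applying the same reasoning to the in-degrees of the $A+b_j$ gives a relation between the $a$'s and $b$'s that is total on both sides. I expect a matching extraction (Hall-type, or the swapping argument from Lemma~\ref{lem:hard-partners}(i)) to relabel so that $A-a_i+b_i\in V$ for all $i$, which makes $A$ hard. This matching step is the main obstacle: a relation where every left and right element has a neighbour need not contain a perfect matching. For instance, $a_1,a_2$ might both relate only to $b_1$. I would rule out such configurations with explicit forbidden quadruples, such as $A-a_1+b_1$, $A-a_2+b_1$ plus two out-neighbours $A+b_2,A+b_3$ restricted to six elements.

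For (ii), let $P=A-a+b$ be a partner of the hard 6-vertex $A$. By Lemma~\ref{lem:hard-partners}(ii), $d^+(A-a)=d^-(A+b)=2$, and $P$ is an out-neighbour of $A-a$ and an in-neighbour of $A+b$. The rule lets $P$ send along an in-edge only if $d^-(P)=1$; in that case its unique in-neighbour is $A-a$, which has degree at most $5$. Similarly, its unique out-neighbour would be $A+b$. I must also ensure $A-a$ and $A+b$ are not partners of some other hard 6-vertex; Lemma~\ref{lem:hard-partners}(iii) and the degree constraint $d^+(A-a)=2$ should exclude this. So $P$ sends nothing. Since it can only gain charge, $C_1(P)\ge C_0(P)$.
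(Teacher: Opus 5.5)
Your route is the paper's: the bipartite graph $G_A$ (edge $a_ib_j$ iff $A-a_i+b_j\in V$) for (i), and Lemma~\ref{lem:hard-partners}(ii) for (ii). However, the decisive step of each part is missing.

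\textbf{Part (i).} The real content of (i) is the perfect-matching step, and you leave it open. Moreover, the quadruple you propose for it, $A-a_1+b_1,\ A-a_2+b_1,\ A+b_2,\ A+b_3$, is not forbidden. Two taxa lying on a common side of all four splits must either both be in $A\setminus\{a_1,a_2\}$ or both be in $X\setminus(A\cup\{b_1,b_2,b_3\})$. The first case needs a taxon lying in $A-a_1+b_1$ and in none of the other three sets. The second needs a taxon lying in all four sets except $A+b_2$. Neither exists.

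The missing argument goes as follows. Since $G_A$ has no isolated vertex, Hall's condition can only fail for a pair of $a$'s: a violating set of size $1$ or $3$ would force an isolated vertex. So after relabelling, $a_1,a_2$ are adjacent only to $b_1$, and hence $b_2,b_3$ are adjacent only to $a_3$. Restrict the vertices $A-a_1+b_1,\ A-a_2+b_1,\ A-a_3+b_2,\ A-a_3+b_3$ to $\{a_1,a_2,a_3,b_1,b_2,b_3\}$. They form the forbidden configuration with common pair $\{a_3,b_1\}$; this is the paper's ``two vertex-disjoint paths of length 2''.

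On the boundary question, only your second reading is tenable. Statement (i) is false for $A=\emptyset$ when all $n-1$ singletons are vertices and $n\ge 8$, since then $C_1(A)=5-(n-1)<0$ and $A$ is not hard. The paper's proof tacitly uses $d(A)\le 6$ from Lemma~\ref{lem:degree-bounds}(i), and Lemma~\ref{lem:nonnegative-charge} only needs $5\le\#A\le n-5$. Also, receiving charge can only raise $C_1$, so your appeals to (ii) in the cases $d(A)\le 4$ are unnecessary.

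\textbf{Part (ii).} You rightly observe that $A-a$ and $A+b$ are not 6-vertices. But the claim that neither is a partner of some other hard 6-vertex is only asserted, and Lemma~\ref{lem:hard-partners}(iii) does not give it. The argument is as follows.

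Suppose $d^+(P)=1$ and $A+b$ is the partner $B-a'+b'$ of a hard 6-vertex $B$. Then $B-a'$ is an in-neighbour of $A+b$. Since $d^-(A+b)=2$, the only in-neighbours of $A+b$ are $A$ and $P$.
If $B-a'=P$, then $B$ is an out-neighbour of $P$, so $B=A+b$, which is not a 6-vertex. Hence $B-a'=A$. Lemma~\ref{lem:hard-partners}(ii) applied to $B$ and its partner then gives $d^+(A)=d^+(B-a')=2$, contradicting $d^+(A)=3$.

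The in-edge case is symmetric. It uses $d^+(A-a)=2$ to restrict the out-neighbours of $A-a$ to $A$ and $P$, and ends with the contradiction $d^-(A)=2$.
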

	
	\begin{proof}
		For (i), every vertex that sends one unit of charge to another vertex has degree at most 4, and every vertex that sends one unit of charge to each of two distinct vertices has degree 2. Therefore, $C_1(A)<0$ implies $C_0(A)<0$, so $A$ is a 6-vertex. Suppose that $A$ is a 6-vertex that is not hard, and let $A+b_i$ and $A-a_i$, for $i=1,2,3$, be its neighbours. Every out-neighbour of $A-a_i$ other than $A$ must be of the form $A-a_i+b_j$; otherwise, that out-neighbour together with $A+b_1,A+b_2,A+b_3$ would violate 2-weak compatibility. Similarly, every in-neighbour of $A+b_i$ must be of the form $A-a_j+b_i$.
		
		Consider the bipartite graph $G_A$ with parts $\{a_1,a_2,a_3\}$ and $\{b_1,b_2,b_3\}$, where $a_ib_j$ is an edge precisely when $A-a_i+b_j$ is a vertex of $G$. If $C_1(A)<0$, then $G_A$ has no isolated vertex, because otherwise a neighbour of $A$ would send one unit of charge to $A$. Moreover, $G_A$ cannot contain two vertex-disjoint paths of length 2. For example, if $G_A$ contains the edges $a_1b_1,a_1b_2,a_2b_3,a_3b_3$, then the vertices
		\[
		A-a_1+b_1,\quad A-a_1+b_2,\quad A-a_2+b_3,\quad A-a_3+b_3
		\]
		are not 2-weakly compatible. The graph $G_A$ also cannot contain a vertex of degree 3, because the edges $a_1b_1,a_1b_2,a_1b_3$ would imply that the vertices
		\[
		A,\quad A-a_1+b_1,\quad A-a_1+b_2,\quad A-a_1+b_3
		\]
		are not 2-weakly compatible. Every subgraph of $K_{3,3}$ with no isolated vertices, no vertex of degree 3, and no two vertex-disjoint paths of length 2 has a 1-factor. Hence $G_A$ has a 1-factor, and therefore $A$ is a hard 6-vertex.
		
		For (ii), suppose that $A-a+b$ is a partner of a hard 6-vertex $A$ and that $C_1(A-a+b)<C_0(A-a+b)$. Then $A-a+b$ has in-degree or out-degree 1; by symmetry, assume that $d^+(A-a+b)=1$. Moreover, $A+b$ must be a 6-vertex or a partner of a hard 6-vertex. By Lemma~\ref{lem:hard-partners}(ii), $d^-(A+b)=2$, so $A+b$ cannot be a 6-vertex. It can therefore only be a partner of a hard 6-vertex of the form $A+b'$ for some $b'\in X-b$. In that case, Lemma~\ref{lem:hard-partners}(ii) implies that $d^-(A)=2$, contradicting the fact that $A$ is a hard 6-vertex.
	\end{proof}
	
	We obtain $C_2$ from $C_1$ by applying the following discharging rule: if $A$ is a hard 6-vertex and $A'$ is a partner of $A$ with $C_1(A')\ge 1$, then $A'$ sends one unit of charge to $A$.
	
	\begin{lemma}\label{lem:nonnegative-charge}
		Let $G=(V,E)$ be a digraph representing a 2-weakly compatible split system $\cS$ on $X$, and let $r\in X$ be the root of $G$. Then $C_2(A)\ge 0$ for every vertex $A$ with $5\le\#A\le n-5$.
	\end{lemma}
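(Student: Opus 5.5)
We prove Lemma~\ref{lem:nonnegative-charge} by tracking how the charge of a vertex $A$ with $5\le\#A\le n-5$ changes through the two discharging steps. The hypothesis on $\#A$ does two things. First, it guarantees that Lemma~\ref{lem:degree-bounds}(i) applies to $A$ and to all its neighbours and partners, since all of these have sizes between $3$ and $n-3$. Second, it guarantees that none of these vertices is the empty split, a trivial split, or a 2-split. Hence every relevant vertex has $d^+\le 3$ and $d^-\le 3$, so $d(A)\le 6$ and $C_0(A)\ge -1$. We split into three cases according to the role of $A$ in the second rule.

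\textbf{Case 1: $A$ is a hard 6-vertex.} Here $C_0(A)=-1$, and we want $C_2(A)\ge 0$. By Lemma~\ref{lem:hard-partners}(i), $A$ has a partner $A'=A-a+b$. By Lemma~\ref{lem:hard-partners}(ii), $d^+(A-a)=d^-(A+b)=2$, so $d(A')\le 4$ and $C_0(A')\ge 1$. By Lemma~\ref{lem:first-charge}(ii), $C_1(A')\ge C_0(A')\ge 1$, so $A'$ sends one unit to $A$ in the second step. We must also check that $A$ itself loses nothing in the first step. A 6-vertex has $d^+=d^-=3$, so it sends nothing. A hard 6-vertex cannot also be a partner of another hard 6-vertex, since by Lemma~\ref{lem:hard-partners}(ii) a partner has a neighbour of out- or in-degree 2, and in any case partners send charge only in the second step. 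Therefore $C_1(A)\ge -1$ and $C_2(A)\ge 0$.

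\textbf{Case 2: $A$ is a partner that sends charge.} Suppose $A$ is a partner of some hard 6-vertex and sends one unit in the second step. By Lemma~\ref{lem:hard-partners}(iii), $A$ is a partner of exactly one hard 6-vertex, so it sends at most one unit. It sends only when $C_1(A)\ge 1$, so $C_2(A)\ge 0$.

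\textbf{Case 3: $A$ sends nothing in the second step.} Then $C_2(A)=C_1(A)$. If $C_1(A)<0$, Lemma~\ref{lem:first-charge}(i) says $A$ is a hard 6-vertex, which is Case 1. Otherwise $C_2(A)\ge 0$ directly.

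The main obstacle is justifying, in Case 1, that all auxiliary vertices (the partner $A'$ and the neighbours $A\pm x$) satisfy the size bounds needed for Lemma~\ref{lem:degree-bounds}(i). We must also confirm that $A'$ has not lost charge in the first step in a way that Lemma~\ref{lem:first-charge}(ii) fails to cover. The margin of $5$ in $5\le\#A\le n-5$ should be more than enough: vertices at distance at most $2$ from $A$ have sizes in $[3,n-3]$. I expect this slack is what the statement is designed to provide.
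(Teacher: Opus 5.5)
Your Cases 2 and 3 match the opening of the paper's proof. Case 1, however, is where all of the lemma's content lies, and it rests on a misreading of Lemma~\ref{lem:hard-partners}(ii).

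\textbf{The gap in Case 1.} That lemma gives $d^+(A-a)=2$ and $d^-(A+b)=2$. These are degrees of two \emph{neighbours} of the partner $A'=A-a+b$: the in-neighbour $A-a$ and the out-neighbour $A+b=A'+a$. They only say that $A-a$ has no out-neighbour besides $A$ and $A'$, and that $A+b$ has no in-neighbour besides $A$ and $A'$. They impose nothing on the other in-neighbours $A'-x$ ($x\in A-a$) or the other out-neighbours $A'+y$ of $A'$. So $d(A')\le 4$ does not follow, and a priori $d(A')\in\{5,6\}$. In that case $C_0(A')\le 0$, and Lemma~\ref{lem:first-charge}(ii) only gives $C_1(A')\ge C_0(A')$, which does not trigger the second rule.

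A hard 6-vertex receives nothing in the first step, since its neighbours $A-a_i$ and $A+b_i$ have out-degree, respectively in-degree, at least $2$. Hence $C_1(A)=-1$. So $C_2(A)\ge 0$ holds exactly when some partner of $A$ has $C_1\ge 1$, and that is the statement your proposal never proves.

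\textbf{How the paper closes it.} The paper proves this by a case analysis using 2-weak compatibility.

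First suppose some crossing vertex exists, say $A-a_1+b_2\in V$. Then the partner must be $A-a_3+b_3$, and it cannot have out-degree $3$. Its two extra out-neighbours $A-a_3+b_3+c_1$ and $A-a_3+b_3+c_2$, together with $A-a_2+b_2$ and $A-a_1+b_2$, form a forbidden configuration. If instead $c_2=b_2$, the vertices $A-a_2+b_2$, $A-a_1+b_2$, $A$, $A-a_3+b_3+b_2$ form one. The in-degree case is symmetric, and only after this does $d(A')\le 4$ follow.

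Now suppose no crossing vertex exists. Then all three vertices $A-a_i+b_i$ are partners, and the paper must treat the case in which each has degree exactly $5$. There no partner starts with $C_0\ge 1$. Instead, the paper shows that a suitable out-neighbour such as $A-a_1+b_1+c_1^2$ has in-degree $1$, so it sends a unit to $A-a_1+b_1$ in the \emph{first} step. This is why the first rule also sends charge to partners of hard 6-vertices. Your proposal identifies neither of these arguments.

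\textbf{The size hypothesis.} The bounds $5\le\#A\le n-5$ are used in these arguments to supply spare taxa outside $A\cup\{b_1,b_2,b_3\}$, and dually inside $A$, for the forbidden configurations. They are not there merely to keep Lemma~\ref{lem:degree-bounds}(i) applicable. So the "main obstacle" you name is not the real one.
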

	
	\begin{proof}
		Suppose first that $C_1(A)\ge 0$. If $A$ sends no charge in the second discharging step, then $C_2(A)\ge C_1(A)\ge 0$. If $A$ sends charge, then $C_1(A)\ge 1$; moreover, Lemma~\ref{lem:hard-partners}(iii) implies that $A$ sends at most one unit. Thus $C_2(A)\ge 0$ in either case. It remains to show that every hard 6-vertex $A$ has a partner $A'$ with $C_1(A')\ge 1$.
		
		Let $A$ be a hard 6-vertex such that $A-a_i$ and $A+b_i$, for $i=1,2,3$, are its six neighbours and $A-a_i+b_i\in V$. Let $A-a_3+b_3$ be a partner of $A$. If $A$ has no other partner, then, without loss of generality, $A-a_1+b_2\in V$. Suppose that $d^+(A-a_3+b_3)=3$, and let
		\[
		A+b_3,\quad A-a_3+b_3+c_1,\quad A-a_3+b_3+c_2
		\]
		be the out-neighbours of $A-a_3+b_3$. If $b_2\notin\{c_1,c_2\}$, then the vertices
		\[
		A-a_3+b_3+c_1,\quad A-a_3+b_3+c_2,\quad A-a_2+b_2,\quad A-a_1+b_2
		\]
		contradict 2-weak compatibility. If $b_2\in\{c_1,c_2\}$, say $b_2=c_2$, then the vertices
		\[
		A-a_2+b_2,\quad A-a_1+b_2,\quad A,\quad A-a_3+b_3+b_2
		\]
		contradict 2-weak compatibility, because the taxa $a_1,a_2,a_3,b_2,b_1,r$ induce the forbidden configuration. Hence $d^+(A-a_3+b_3)\le 2$, and, by symmetry, $d^-(A-a_3+b_3)\le 2$. Thus $C_1(A-a_3+b_3)\ge 1$, and the second discharging rule gives $C_2(A)=0$.
		
		It remains to consider the case in which no vertex $A-a_i+b_j$ with $i\ne j$ and $\{i,j\}\subseteq\{1,2,3\}$ belongs to $V$. Then all three vertices $A-a_i+b_i$, for $i=1,2,3$, are partners of $A$. The assertion holds if one of these vertices has degree at most 4. We may therefore assume that every partner of $A$ has degree at least 5. It follows that two partners of $A$ have out-degree 3 or two have in-degree 3. Without loss of generality, assume that
		\[
		d^+(A-a_1+b_1)=d^+(A-a_2+b_2)=3.
		\]
		For $i=1,2$, let
		\[
		A-a_i+b_i+c_i^1,\quad A-a_i+b_i+c_i^2
		\]
		be the two out-neighbours of $A-a_i+b_i$ other than $A+b_i$. We have
		\[
		b_3\notin \{c_1^1,c_1^2\}\cap\{c_2^1,c_2^2\},
		\]
		because otherwise the vertices
		\[
		A,\quad A+b_3-a_3,\quad A+b_3+b_2-a_2,\quad A+b_3+b_1-a_1
		\]
		would not be 2-weakly compatible: the taxa $a_1,a_2,a_3,b_3$, together with two elements of $X-A-b_1-b_2-b_3$, would induce the forbidden configuration. Hence, without loss of generality, $b_3\notin\{c_1^1,c_1^2\}$.
		
		We have $A+b_3-a_3-a_1\notin V$, because this vertex together with
		\[
		A,\quad A+b_1-a_1+c_1^1,\quad A+b_1-a_1+c_1^2
		\]
		would violate 2-weak compatibility. Moreover, $d^-(A-a_3+b_3)=3$ is impossible; otherwise, the two in-neighbours of $A-a_3+b_3$ other than $A-a_3$, together with $A+b_1-a_1+c_1^1$ and $A+b_1-a_1+c_1^2$, would not be 2-weakly compatible.
		
		We have shown that
		\[
		d^+(A-a_1+b_1)=d^+(A-a_2+b_2)=3
		\]
		implies $d^-(A-a_3+b_3)=2$. By symmetry, and because $d(A-a_i+b_i)\ge 5$ for $i=1,2,3$, we may assume that
		\[
		d^+(A-a_i+b_i)=3\quad\text{and}\quad d^-(A-a_i+b_i)=2
		\]
		for $i=1,2,3$. Using the same argument that allowed us to assume $b_3\notin\{c_1^1,c_1^2\}$, we may also assume that $b_1\notin\{c_2^1,c_2^2\}$. Let $A-a_1+b_1-d_1$ and $A-a_1$ be the in-neighbours of $A-a_1+b_1$. We cannot have
		\[
		\{A-a_1+b_1-d_1+c_1^1,\ A-a_1+b_1-d_1+c_1^2\}\subseteq V,
		\]
		because these two vertices, together with either $A-a_2,A-a_3$ if $d_1\notin\{a_2,a_3\}$, or with $A-a_1,A-d_1$ otherwise, would not be 2-weakly compatible. Assume, without loss of generality, that
		\[
		A-a_1+b_1-d_1+c_1^2\notin V.
		\]
		If $d^-(A-a_1+b_1+c_1^2)>1$, choose $f\in X\setminus\{c_1^2,d_1\}$ such that
		\[
		A-a_1+b_1+c_1^2-f\in V.
		\]
		Then $f\ne b_1$, because otherwise the vertices $A-a_1+c_1^2,A+b_1,A+b_2,A+b_3$ are not 2-weakly compatible if $c_1^2\notin\{b_2,b_3\}$, while $A-a_1+b_1$ is not a partner of $A$ otherwise. Furthermore, $a_2\notin\{d_1,f\}$, because otherwise the vertices
		\[
		A-a_2+b_2+c_2^1,\quad A-a_2+b_2+c_2^2,\quad A+b_2,
		\]
		together with $A-a_1-a_2+b_1$ if $d_1=a_2$, or with $A-a_1-a_2+c_1^2$ if $f=a_2$, would not be 2-weakly compatible. However, $f\ne b_1$, $a_2\notin\{d_1,f\}$, and $b_1\notin\{c_2^1,c_2^2\}$ imply that the vertices
		\[
		A-a_1+b_1+c_1^2-f,\quad A-a_1+b_1-d_1,\quad A-a_2+b_2+c_2^1,\quad A-a_2+b_2+c_2^2
		\]
		are not 2-weakly compatible, because the taxa $a_2,b_1,c_2^1,c_2^2,d_1,f$ induce the forbidden configuration. Therefore,
		\[
		d^-(A-a_1+b_1+c_1^2)=1.
		\]
		Thus $A-a_1+b_1+c_1^2$ sends one unit of charge to $A-a_1+b_1$ in the first discharging step, and $A-a_1+b_1$ sends one unit of charge to $A$ in the second step. Hence $C_2(A)\ge 0$.
	\end{proof}
	
	To prove Theorem~\ref{thm:edge-bound}, it suffices to show that
	\[
	\sum_{A\in V}d(A)\le 5\#V+cn^2
	\]
	for some constant $c$. Equivalently,
	\[
	-\sum_{A\in V}C_0(A)\le cn^2.
	\]
	Both discharging rules preserve the total charge, and Lemma~\ref{lem:nonnegative-charge} gives $C_2(A)\ge 0$ for every vertex satisfying $5\le\#A\le n-5$. Therefore, it remains only to prove
	\[
	-\sum_{\substack{A\in V\\ \#A\ge n-4\text{ or }\#A\le 4}}C_2(A)\le cn^2,
	\]
	where the sum ranges over vertices corresponding to splits of size at most 4.
	
	\begin{proof}[Proof of Theorem~\ref{thm:edge-bound}]
		In the second discharging step, a vertex $A$ can send charge to a vertex $A'$ only if $\#A=\#A'$. Thus, the total charge among vertices of each fixed size is preserved, and
		\[
		-\sum_{\substack{A\in V\\ \#A\ge n-4\text{ or }\#A\le 4}}C_2(A)
		=
		-\sum_{\substack{A\in V\\ \#A\ge n-4\text{ or }\#A\le 4}}C_1(A).
		\]
		By the definition of $C_1$,
		\[
		C_1(A)\ge \min\{0,5-d(A)\}.
		\]
		By Lemma~\ref{lem:degree-bounds}(iii), vertices representing the empty split or a trivial split have degree at most $n-1$. There are at most $n+1$ such vertices, so the sum of $C_1(A)$ over all $A$ with $\#A\in\{0,1,n-1\}$ is at least $-(n+1)(n-6)$. Lemma~\ref{lem:degree-bounds}(i) and (ii) imply that $C_1(A)\ge 0$ when $\#A\in\{2,n-2\}$ and that $C_1(A)\ge -1$ when $\#A\in\{3,4,n-4,n-3\}$. It therefore suffices to bound the numbers of 3-splits and 4-splits.
		
		Let $\cA_3$ and $\cA_4$ be the collections of all 3-sets and 4-sets $A$, respectively, such that $A\in V$ or $X-A\in V$. For every pair of taxa $x_1,x_2\in X$, at most three sets in $\cA_3$ contain $\{x_1,x_2\}$, because $G$ represents a 2-weakly compatible split system. Since every 3-set contains three pairs,
		\[
		\#\cA_3\le\binom{n}{2}.
		\]
		
		To bound $\#\cA_4$, define, for every pair of taxa $x_1,x_2\in X$, a graph $G_{x_1,x_2}$ with vertex set $X-x_1-x_2$. It contains an edge $xx'$ if and only if $\{x_1,x_2,x,x'\}\in\cA_4$. By 2-weak compatibility, there do not exist four edges $e_i$ and four taxa $y_i$, for $i=1,\ldots,4$, such that $y_i$ is incident with $e_j$ if and only if $i=j$. Consequently, the maximum degree of $G_{x_1,x_2}$ is at most 3.
		
		We claim that $G_{x_1,x_2}$ has at most seven edges. Suppose otherwise, and let $G'_{x_1,x_2}$ be an edge-induced subgraph with eight edges. Since eight is not divisible by 3, the graph $G'_{x_1,x_2}$ is not 3-regular. If it contains a vertex $y$ of degree 2, let $y_1,y_2$ be the neighbours of $y$, and set $e_1=yy_1$ and $e_2=yy_2$. Since the degrees of $y_1$ and $y_2$ are at most 3, there are at least two edges $e_3,e_4$ not incident with any of $y,y_1$, or $y_2$. We can then choose vertices $y_3,y_4$ such that, for $i,j\in\{1,\ldots,4\}$, the vertex $y_i$ is incident with $e_j$ if and only if $i=j$, a contradiction.
		
		If $G'_{x_1,x_2}$ contains an edge $yy_1$ such that $y_1$ has degree 1 and $y$ has degree 3, let $y_2,y_3$ be the other neighbours of $y$, and set $e_2=yy_2$ and $e_3=yy_3$. There is then an edge $e_4$ not incident with $y$ or any of $y_1,y_2,y_3$. Choosing $y_4$ to be a vertex incident with $e_4$ again yields a contradiction. The final case, in which $G'_{x_1,x_2}$ contains an isolated edge, yields the same contradiction. This proves the claim.
		
		Thus, for every pair of taxa $x_1,x_2$, at most seven sets in $\cA_4$ contain $\{x_1,x_2\}$. Since every 4-set contains six pairs,
		\[
		\#\cA_4\le\frac{7}{6}\binom{n}{2}.
		\]
		In summary,
		\[
		\sum_{A\in V}C_1(A)
		\ge -(n+1)(n-6)-\frac{13}{6}\binom{n}{2}
		> -\frac{25}{12}n^2.
		\]
		Therefore,
		\[
		\#E\le \frac{5}{2}\#V+cn^2
		\]
		with $c=25/24$.
	\end{proof}
	
	\section{Proof of Theorem~\ref{thm:main}}
	
	We first establish a general result relating the average degree of graphs representing split systems to the sizes of those split systems.
	
	\begin{theorem}\label{thm:recurrence}
		Let $\cS$ be a split system on $X$ with $\#X=n$. Suppose that there exist positive constants $c,k,\ell$ with $\ell<k$ such that, for every $Y\subseteq X$, every graph $G=(V,E)$ representing $\cS|_Y$ satisfies
		\begin{equation}\label{eq:general-edge-bound}
			\#E\le k\#V+c(\#Y)^{\ell}.
		\end{equation}
		Then $\#\cS=O(n^k)$.
	\end{theorem}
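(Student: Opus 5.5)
The plan is to turn the edge bound~\eqref{eq:general-edge-bound} into a recurrence for the sizes of restrictions of $\cS$, using Proposition~\ref{prop:x-pairs}, and then to unroll that recurrence. For $m\le n$ let
\[
f(m):=\max\{\#(\cS|_Y):Y\subseteq X,\ \#Y=m\}.
\]
Since $(\cS|_Y)|_{Y-x}=\cS|_{Y-x}$, the hypothesis applies to every restriction, so it suffices to bound $f$.

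\textbf{Step 1: the key identity.} Fix $Y\subseteq X$ with $\#Y=m$ and a root $r\in Y$, and put $\cS_Y:=\cS|_Y$. Let $G=G_r(\cS_Y)$. The map $s\mapsto a_r(s)$ is a bijection from $\cS_Y$ to $V$, because a split is determined by its part avoiding $r$; hence $\#V=\#\cS_Y$. The $x$-labelled edges of $G$ correspond exactly to the $x$-pairs of $\cS_Y$ for $x\in Y-r$. Therefore
\[
\#E=\sum_{x\in Y-r}\#\partial_x\cS_Y=\sum_{x\in Y-r}\bigl(\#\cS_Y-\#\cS|_{Y-x}\bigr),
\]
where the second equality is Proposition~\ref{prop:x-pairs}. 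I would check this bijection and the edge--pair correspondence carefully, including the empty split, which is kept precisely so that Proposition~\ref{prop:x-pairs} holds exactly.

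\textbf{Step 2: the recurrence.} Combining Step~1 with~\eqref{eq:general-edge-bound} gives
\[
(m-1)\#\cS_Y-(m-1)f(m-1)\le k\,\#\cS_Y+c\,m^{\ell}.
\]
For $m>k+1$ we may divide by $m-1-k>0$. Maximizing over $Y$ then yields
\[
f(m)\le \frac{m-1}{m-1-k}\,f(m-1)+\frac{c\,m^{\ell}}{m-1-k}.
\]

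\textbf{Step 3: unrolling.} Fix an integer $m_0>k+1$ and bound the base value trivially by $f(m_0)\le 2^{m_0}$, a constant. Set
\[
P(m):=\prod_{j=m_0+1}^{m}\frac{j-1}{j-1-k}.
\]
Then $\log P(m)=\sum_j-\log\bigl(1-\tfrac{k}{j-1}\bigr)=k\log m+O(1)$, so $P(m)=\Theta(m^k)$. Dividing the recurrence by $P(m)$ and telescoping gives
\[
\frac{f(n)}{P(n)}\le f(m_0)+\sum_{m=m_0+1}^{n}\frac{c\,m^{\ell}}{(m-1-k)P(m)}.
\]
Each summand is $O(m^{\ell-k-1})$. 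Since $\ell<k$, this series converges, so the right-hand side is bounded by a constant. Hence $\#\cS=f(n)=O(P(n))=O(n^k)$.

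\textbf{Main obstacle.} The only delicate point is Step~1: the identification $\#E=\sum_{x}\#\partial_x\cS_Y$ and $\#V=\#\cS_Y$, which requires consistent handling of the empty and trivial splits and of the ignored $r$-pairs. The asymptotic estimate of $P(m)$ is routine. The condition $\ell<k$ is used exactly once, to make the error sum converge; with $\ell\ge k$ the same argument would only give the weaker bound $O(n^{\ell}\log n)$ or similar.
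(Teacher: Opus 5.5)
Your proposal is correct and follows the paper's proof. It uses the same identity $\#E=\sum_{x\in Y-r}\#\partial_x\cS_Y$ via Proposition~\ref{prop:x-pairs}, the same recurrence $f(m)\le\frac{m-1}{m-1-k}f(m-1)+\frac{cm^{\ell}}{m-1-k}$ for $m>k+1$, and the same telescoping argument, where $\ell<k$ makes the error series converge; the one cosmetic difference is that you normalize by the exact product $P(m)=\Theta(m^k)$, whereas the paper first replaces $\frac{m-1}{m-1-k}$ by $\bigl(\frac{m-k}{m-1-k}\bigr)^k$ using Lemma~\ref{lem:log-concavity}, so that its normalizer is exactly $(m-k)^k$.
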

	
	After increasing the constant $c$, if necessary, to account for restrictions to sets with fewer than six elements, Theorem~\ref{thm:main} follows from Theorems~\ref{thm:edge-bound} and~\ref{thm:recurrence}.
	
	\begin{lemma}\label{lem:log-concavity}
		If $n>k+1>1$, then
		\[
		\frac{n-1}{n-1-k}\le\left(\frac{n-k}{n-1-k}\right)^k.
		\]
	\end{lemma}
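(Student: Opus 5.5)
The plan is to reduce the inequality to Bernoulli's inequality with a single substitution. Put $m:=n-1-k$; the hypothesis $n>k+1$ says exactly that $m>0$. The numerators then become $n-1=m+k$ and $n-k=m+1$, so the claim reads
\[
1+\frac{k}{m}\;\le\;\left(1+\frac{1}{m}\right)^{k}.
\]
This is Bernoulli's inequality $(1+x)^k\ge 1+kx$ with $x=1/m>0$.

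To justify it for real exponents, I would use convexity. Let $f(t)=(1+t)^k$ on $t>-1$. We have $f''(t)=k(k-1)(1+t)^{k-2}$, which is nonnegative when $k\ge 1$. A convex function lies above its tangent line at $0$, and that tangent is $1+kt$. Evaluating at $t=1/m$ gives the displayed inequality, with equality exactly when $k=1$ (or $t=0$, which does not occur). An alternative is to compare logarithms: set $g(t)=k\log(1+t)-\log(1+kt)$, note $g(0)=0$, and check $g'(t)=k/(1+t)-k/(1+kt)\ge 0$ for $t\ge 0$ when $k\ge 1$.

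The only real obstacle is the range of $k$. The hypothesis $k+1>1$ only gives $k>0$, and for $0<k<1$ Bernoulli's inequality reverses. Indeed, $(1+1/m)^k<1+k/m$ for such $k$, so the lemma is false in that range. I would therefore state the lemma for $k\ge 1$. This costs nothing for the paper: Lemma~\ref{lem:log-concavity} is presumably used in the proof of Theorem~\ref{thm:recurrence} to control the ratio $\binom{n-1}{k}^{-1}$-type quantities in a recursion on $n$, and in the application to Theorem~\ref{thm:main} the exponent is $k=5/2$. The proof of Theorem~\ref{thm:recurrence} should then carry the assumption $k\ge 1$, or else handle $k<1$ by replacing $k$ with $1$, since $O(n^k)\subseteq O(n)$ is not needed there. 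With $k\ge1$ fixed, the argument above is complete.
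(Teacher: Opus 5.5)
Your argument is correct and is essentially the paper's. The paper takes logarithms and applies concavity of $\log$ to $n-k=\tfrac1k(n-1)+\tfrac{k-1}{k}(n-1-k)$, which is the same convexity fact as your Bernoulli/tangent-line step.

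Your caveat is also right and should be kept. The paper's Jensen step silently needs the weight $\tfrac{k-1}{k}\ge 0$, and for $0<k<1$ the inequality genuinely reverses (e.g.\ $k=\tfrac12$, $n=2$ gives $2>\sqrt3$). So the lemma should assume $k\ge 1$. This is harmless for the main result, because Theorem~\ref{thm:recurrence} is only applied with $k=5/2$.
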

	
	\begin{proof}
		After taking logarithms, it suffices to prove
		\[
		\log(n-1)+(k-1)\log(n-1-k)\le k\log(n-k),
		\]
		which follows from the concavity of the logarithm.
	\end{proof}
	
	\begin{proof}[Proof of Theorem~\ref{thm:recurrence}]
		Let
		\[
		a_m:=\max_{Y\in\binom{X}{m}}\#(\cS|_Y),
		\]
		and choose $Y_m\in\binom{X}{m}$ such that $\#(\cS|_{Y_m})=a_m$. Set $\cS_m:=\cS|_{Y_m}$. For every integer $m>k+1$, fix a root $r\in Y_m$ and consider the graph $G_r(\cS_m)=(V,E)$. Then
		\[
		\#E=\sum_{x\in Y_m-r}\#\partial_x\cS_m
		\ge (m-1)(a_m-a_{m-1}).
		\]
		Combining this inequality with the hypothesis of Theorem~\ref{thm:recurrence} gives
		\[
		ka_m+cm^{\ell}\ge \#E\ge (m-1)(a_m-a_{m-1}),
		\]
		and hence
		\begin{align}
			a_m
			&\le \frac{m-1}{m-1-k}a_{m-1}+\frac{cm^{\ell}}{m-1-k} \label{eq:recurrence-first}\\
			&\le \left(\frac{m-k}{m-1-k}\right)^k a_{m-1}+\frac{cm^{\ell}}{m-1-k}. \label{eq:recurrence-second}
		\end{align}
		where the second inequality follows from Lemma~\ref{lem:log-concavity}. Dividing by $(m-k)^k$ yields
		\begin{equation}\label{eq:normalized-recurrence}
			\frac{a_m}{(m-k)^k}
			\le
			\frac{a_{m-1}}{(m-1-k)^k}
			+
			\frac{cm^{\ell}}{(m-1-k)(m-k)^k}.
		\end{equation}
		
		Let $m_0=\lfloor k\rfloor+1$, so that $m_0>k$. Iterating the preceding inequality for $m=m_0+1,\ldots,n$ gives
		\begin{equation}\label{eq:iterated-recurrence}
			\frac{a_n}{(n-k)^k}
			\le
			\frac{a_{m_0}}{(m_0-k)^k}
			+
			\sum_{i=m_0+1}^{n}
			\frac{ci^{\ell}}{(i-1-k)(i-k)^k}.
		\end{equation}
		The series on the right converges because its summand is $O(i^{\ell-k-1})$ and $\ell<k$. It is therefore bounded independently of $n$. Consequently,
		\[
		a_n=O(n^k),
		\]
		which proves the theorem.
	\end{proof}
	
	\section{Discussion and open problems}
	
	Theorem~\ref{thm:edge-bound} is nearly sharp in the sense that there exist digraphs representing 2-weakly compatible split systems for which $d(v)=5$ for almost every vertex $v\in V$. Let $X=\{1,\ldots,n\}$, and consider the split system
	\begin{align*}
		\cS={}&\{i,\ldots,j\mid 1,\ldots,i-1,j+1,\ldots,n:2\le i\le j\le n\}\\
		&\cup\{1,i,\ldots,j\mid 2,\ldots,i-1,j+1,\ldots,n:2\le i\le j\le n\}.
	\end{align*}
	Then $\cS$ is 2-weakly compatible and $\#\cS=n(n-1)$. Choose 2 as the root. Locally, the graph resembles a two-layer grid. If $2<i<j<n$, then each of the splits
	\[
	i,\ldots,j\mid 1,\ldots,i-1,j+1,\ldots,n
	\]
	and
	\[
	1,i,\ldots,j\mid 2,\ldots,i-1,j+1,\ldots,n
	\]
	is incident with five distinct edges labelled $1,i,j,i-1,j+1$. Since the number of splits with $i=2$, $i=j$, or $j=n$ is linear in $n$, the average degree of $G_2(\cS)$ is $5-o(1)$. Therefore, the coefficient $5/2$ of $\#V$ in Theorem~\ref{thm:edge-bound} cannot be improved.
	
	Nevertheless, our graph method may still yield a stronger bound. Let $M_2(n)$ denote the maximum size of a 2-weakly compatible split system on $n$ taxa. We propose the following conjecture, which would imply $M_2(n)=o(n^{2+\eps})$ for every $\eps>0$. In fact, the recurrence used in the proof of Theorem~\ref{thm:recurrence} would then yield $M_2(n)=O(n^2\log n)$.
	
	\begin{conjecture}\label{conj:quadratic-error}
		There exists an absolute constant $c>0$ such that, for every 2-weakly compatible split system $\cS$ on an $n$-element set and every associated digraph $G_r(\cS)=(V,E)$,
		\[
		\#E\le 2\#V+cn^2.
		\]
	\end{conjecture}
	
	The main innovation of this paper is the translation of a Tur\'an-type problem on split systems into a graph-theoretic problem. Given a split system $\cS$ on $X$ and a fixed root $r\in X$, we represent each split by the part not containing $r$ and draw a directed edge between two such sets when one is obtained from the other by adding a single element. Such an edge corresponds to an $x$-pair for some $x\in X-r$. This representation allows graph-theoretic methods to be applied to Tur\'an-type problems on split systems: rather than counting $x$-pairs for a fixed $x$, it captures the aggregate structure of all such pairs across all $x$ in a single directed graph. Related extremal problems for cross-free families of sets have been studied extensively; see, for example,~\cite{Fleiner2001,Pevzner1994,Suk2008,KupavskiiPachTomon2019}. We expect the graph representation introduced here to be useful in other Tur\'an-type problems for set systems and split systems.

\end{document}